\documentclass[12pt]{amsart}
\topmargin0in
\textheight8.5in
\oddsidemargin0.2in
\evensidemargin0.2in
\textwidth6in
\advance\hoffset by -0.5 truecm
\usepackage{amssymb}
\newtheorem{Theorem}{Theorem}[section]
\newtheorem{Lemma}[Theorem]{Lemma}

\def\V{\mbox{Var}}

\def\R\re
\def\V{\bf V}

\def \re{{\mathbb R}}

\def \0{\lambda_{0}}

\begin{document}
\title[Note Yamabe equation]{A note on solutions of Yamabe-type  equations on products of spheres}

\author{Jimmy Petean}\thanks{J. Petean is supported by grant 220074 of Fondo Sectorial de Investigaci\'{o}n para la Educaci\'{o}n SEP-CONACYT}
\address{Centro de Investigaci\'{o}n en Matem\'{a}ticas, CIMAT, Calle Jalisco s/n, 36023 Guanajuato, Guanajuato, M\'{e}xico}
\email{jimmy@cimat.mx}%

\author{H\'ector Barrantes G. }
\address{Centro de Investigaci\'{o}n en Matem\'{a}ticas, CIMAT, Calle Jalisco s/n, 36023 Guanajuato, Guanajuato, M\'{e}xico.
Universidad de Costa Rica. Sede de Occidente. 20201. Alajuela. Costa Rica.}
\email{hector.barrantes@cimat.mx, hector.barrantes@ucr.ac.cr}%

\begin{abstract}We consider  Yamabe-type  equations on the Riemannian product of constant curvature metrics on $\textbf{S}^n \times\textbf{ S}^n$,
and study solutions  which are invariant by the cohomogeneity one diagonal action of $O(n+1)$. We obtain multiplicity results for both positive
and nodal solutions. In particular we prove the existence of
nodal solutions of the Yamabe equation on these products  which depend non-trivially on both factors.

\end{abstract}
\maketitle

\section{Introduction}

Given a Riemannian manifold $(M^n , g)$ of dimension $n\geq 3$,  the {\it Yamabe equation} is

$$- a_n \Delta_g u + s_g u = \lambda |u|^{p_n -2} u,$$

\noindent
where $s_g$ is the scalar curvature of $g$, $a_n =\frac{4(n-1)}{n-2}$, $p_n =\frac{2n}{n-2}$ and $\lambda \in \re$. In
case $u$ is a positive solution of this equation then the conformal metric $u^{p_n -2} g$ has constant scalar curvature
$\lambda$. An important role in the study of the equation has been played by the {\it Yamabe constant} of the conformal class
$[g]$ of $g$, which we denote by $Y(M,[g])$ and is given by

$$Y(M,[g])= \inf_{h\in [g]} \frac{\int_M s_h  \ dv_h}{(Vol(M,h)^{\frac{n-2}{n}}},$$

\noindent
where $dv_h$ is the volume element of the metric $h$ and $Vol(M,h)$ is the volume of $(M,h)$.
Throughout the combined efforts of H. Yamabe \cite{Yamabe}, N. Trudinger \cite{Trudinger}, T. Aubin \cite{Aubin} and R. Schoen
\cite{Schoen} it was proved that the equation
always has at least one solution, for which the corresponding constant scalar curvature metric realizes the
Yamabe constant. The equation can be normalized so that $\lambda$ is $-1, 0$ or $1$, according to  the sign of
$Y(M,g)$. The minimizing solution is the unique solution in case $Y(M,[g]) \leq 0$, i.e. $\lambda=-1$ or $\lambda =0$.
It is also unique in case the minimizing solution is
Einstein and different from the constant curvature metric on ${\bf S}^n$, which we will denote by $g_0^n$,
by a theorem of M. Obata \cite{Obata}. $({\bf S}^n , g_0^n )$
has a non-compact group of conformal transformations, which give a non-compact family of solutions to the Yamabe equation.
This is the first example of multiplicity of solutions and plays an important role in the theory. Another important example, or family of
examples, where one has multiplicity of solutions is that of Riemannian products.  If $(M \times N, g+h)$ is a Riemannian product
with constant scalar curvature, with $s_h >0$, then one considers $(M^n \times N^k , g + \delta h )$ for $\delta >0$ and small.
It can be seen by general considerations that for $\delta$ small enough the product metric cannot be a minimizer for the Yamabe
constant. Therefore there must be at least one other solution. But there are several results showing that the number of
solutions grows as $\delta \rightarrow 0$, see for instance
\cite{BP2, Piccione, Henry, Schoen2}. The solutions built in these articles are actually functions of $M$.
A function
$u:M \rightarrow \re$, considered as a function on $M\times N$ gives a solution for the Yamabe equation for $g+\delta h$ if it
satisfies

$$-a_{n+k} \Delta_g u + (s_g + \delta^{-1} s_h )u= |u|^{p_{n+k} -2} u ,$$

\noindent
where we have normalized the positive constant $\lambda$ to be 1. Note that $p_{n+k} < p_n$. So in this context one would
be interested in positive solutions of the {\it subcritical} equation

$$-\Delta u + \lambda u = \lambda |u|^{p-2} u ,$$

\noindent
where $\lambda$ is a positive constant, $2<p\leq p_n$, and we have now renormalized the equation so that $u\equiv 1$ is a solution.

Many multiplicity results have also  been obtained for these equations
using Lyapunov-Schmidt reduction and other topological methods (see for instance \cite{Dancer, DKM, Micheletti}).

There has also been interest in {\it nodal} solutions of the equations (i.e. solutions that change sign). See for instance the articles
\cite{Ammann2, Clapp1, Clapp2, Ding, JC, GHenry, Robert} and the references in them. Nodal solutions $u$ do
 not give metrics of constant scalar curvature since $u$ vanishes at some points and therefore $|u|^{p_n -2} g$ is
not
a Riemannian metric. But they have geometric interest. The existence of at least one nodal solution is proved
in general cases in \cite{Ammann2}, as minimizers for the second Yamabe invariant. But there are not as many results
about multiplicity of nodal solutions as in the positive case.

\vspace{.5cm}

In this article we will  consider the products of spheres $({\bf S}^n \times {\bf S}^n , g_0^n \times \delta g_0^n )$, with  $\delta >0$
(note that for $\delta =1$ the metric is Einstein). Solutions for $\delta$ small and subcritical exponent
 have been built in \cite{Henry, YanYan, Petean2}, which depend only on the first factor.  Interest in finding all solutions of the Yamabe equation  in
this case comes from trying to compute the Yamabe constants and its limit $\lim_{\delta \rightarrow 0}Y({\bf S}^n \times {\bf S}^n , [g_0^n +
\delta g_0^n ]) = Y({\bf S}^n \times \re^n , g_0^n + dx^2)$ (see  \cite{Akutagawa}).
An important question raised in \cite{Akutagawa, Ammann}, related to the  computations of these Yamabe constants, is whether all solutions of the
Yamabe equation on certain Riemannian products, like products of spheres (or the product of a sphere with Euclidean space),
 depend on only one of the factors. The main goal of this article
is to show the existence of solutions which depend non-trivially on both factors: positive solutions  when $p<p_n$  and nodal solutions when $p=p_n$. To build such solutions we consider
the isometric $O(n+1)$-action on ${\bf S}^n \times {\bf S}^n$ given by $A\cdot (x,y) =(Ax, A y )$. It is a cohomogeneity one action. From now
on by an {\it invariant function} on ${\bf S}^n \times {\bf S}^n$ we will mean a function which is invariant by this diagonal $O(n+1)$-action, there is
no risk of confusion since we will only consider this action. Note that any invariant function which is not constant depends non-trivially
on both factors of ${\bf S}^n \times {\bf S}^n$.

We first consider nodal solutions of the (critical) Yamabe equation on the products $({\bf S}^n \times {\bf S}^n , g_0^n +
\delta g_0^n )$. We will prove:

\begin{Theorem}\label{nodal} For any $\delta >0$ the Yamabe equation on $({\bf S}^n \times {\bf S}^n , g_0^n + \delta g_0^n )$,

\begin{equation}
-a_{2n} \Delta_{g_0^n + \delta g_0^n} u + n(n-1) (1+\delta^{-1} ) u = n(n-1) (1+\delta^{-1} ) |u|^{p_{2n} -2} u
\end{equation}

\noindent
admits infinite nodal solutions which are invariant by the diagonal action of $O(n+1)$.
\end{Theorem}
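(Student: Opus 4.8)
The plan is to reduce the PDE to an ODE using the cohomogeneity one structure of the diagonal $O(n+1)$-action, and then to produce infinitely many nodal solutions of that ODE by a variational (or shooting) argument exploiting the subcritical nature of the reduced problem. First I would identify the orbit space: for the diagonal action $A\cdot(x,y)=(Ax,Ay)$ on ${\bf S}^n\times{\bf S}^n$, the invariant of a pair $(x,y)$ is the angle $\theta=\theta(x,y)\in[0,\pi]$ between $x$ and $y$ (equivalently $\langle x,y\rangle=\cos\theta$), so an invariant function is $u=u(\theta)$ and the quotient is the interval $[0,\pi]$. A routine computation of the Laplacian $\Delta_{g_0^n+\delta g_0^n}$ on such functions will give an ODE of the form
\begin{equation}
-a_{2n}\,\bigl(1+\tfrac{1}{\delta}\bigr)\,\frac{1}{J(\theta)}\,\bigl(J(\theta)\,u'(\theta)\bigr)' + n(n-1)\bigl(1+\tfrac1\delta\bigr)\,u = n(n-1)\bigl(1+\tfrac1\delta\bigr)\,|u|^{p_{2n}-2}u
\end{equation}
on $(0,\pi)$, with Neumann conditions $u'(0)=u'(\pi)=0$, where $J(\theta)=(\sin\theta)^{n-1}$ is (a constant multiple of) the volume density of the orbit through $\theta$. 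After rescaling $\theta$ and $u$ this is exactly a one-dimensional version of the subcritical equation $-\Delta v+\lambda v=\lambda|v|^{p-2}v$ appearing in the introduction, with weight $J$, and the crucial point is that \emph{on a one-dimensional domain every exponent $p>2$ is subcritical}, so the Sobolev embedding $H^1\hookrightarrow L^p$ is compact even though $p_{2n}$ is critical for the original $2n$-dimensional problem.

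Next I would set up the variational framework on the weighted Sobolev space $H=\{u:(0,\pi)\to\re : \int_0^\pi (u'^2+u^2)\,J\,d\theta<\infty\}$ and the functional whose critical points are weak solutions of the reduced ODE; standard elliptic regularity (the weight $J$ is smooth and positive on $(0,\pi)$ and its degeneracy at the endpoints is the harmless degeneracy of polar-type coordinates) upgrades weak solutions to smooth invariant functions on ${\bf S}^n\times{\bf S}^n$. To get \emph{infinitely many nodal} solutions I would invoke the classical symmetric mountain pass / Lusternik--Schnirelmann machinery: the functional is even, satisfies the Palais--Smale condition thanks to the compact embedding, and has the linking geometry, so it has an unbounded sequence of critical values $c_k\to\infty$, hence infinitely many distinct solutions $u_k$. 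That the $u_k$ are nodal for $k$ large follows because $u\equiv 1$ and its negative are the only constant solutions and have bounded energy, while $\|u_k\|\to\infty$; alternatively, and more cleanly, I would prescribe the number of sign changes directly by a shooting argument in $\theta$ or by minimizing the functional over the Nehari-type set of functions with a prescribed number of interior zeros, producing for each $m\ge 1$ a solution with exactly $m$ nodal domains.

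The main obstacle I anticipate is not existence of ODE solutions per se — subcritical one-dimensional problems are well understood — but rather making sure the solutions produced are genuinely \emph{nodal} and \emph{nonconstant}, i.e. that the construction does not merely keep returning the trivial solutions $u\equiv\pm1$ or, in the case where one uses an abstract multiplicity theorem, that the infinitely many abstract critical points are actually pairwise distinct and sign-changing. Handling this requires a lower bound on the growth of the critical levels $c_k$ together with an \emph{upper} energy bound for any constant or any positive solution, so that all but finitely many of the $u_k$ must change sign; the endpoint degeneracy of the weight $J$ must be controlled carefully here, both for the Palais--Smale condition and for the regularity at $\theta=0,\pi$ (which corresponds to the two singular orbits $x=\pm y$ of the cohomogeneity one action). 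Once the count of nodal domains is pinned down — most transparently via the Nehari-set-with-prescribed-zeros approach — distinctness is immediate and the theorem follows for every $\delta>0$, since $\delta$ enters the reduced equation only through the overall positive constant $1+\delta^{-1}$, which rescales away.
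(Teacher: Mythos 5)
Your reduction to an ODE on the orbit space $[0,\pi]$ with weight $J(\theta)=\sin^{n-1}\theta$ and Neumann conditions is exactly the paper's starting point (phrased there via the isoparametric function $f(p,q)=\langle p,q\rangle$ and $w(r)=\varphi(\cos r)$). But the step you single out as ``the crucial point'' is wrong as stated: it is not true that every exponent $p>2$ becomes subcritical because the orbit space is one-dimensional. The natural energy space for the reduced problem is the \emph{weighted} space with weight $\sin^{n-1}\theta$, whose elements near the endpoints $\theta=0,\pi$ behave like radial $H^1$ functions on an $n$-dimensional ball rather than like elements of $H^1(0,\pi)$; such functions need not be bounded, and the embedding into $L^p(J\,d\theta)$ is compact only for $p<p_n=\frac{2n}{n-2}$. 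What the reduction actually buys is that the $2n$-dimensional critical exponent $p_{2n}=\frac{2n}{n-1}$ lies strictly below the $n$-dimensional one $p_n$, and this inequality --- not one-dimensionality --- is the source of compactness. It is precisely the hypothesis under which the paper invokes \cite[Theorem 3.1]{JC} to obtain initial data whose shooting solution oscillates many times. Your argument can be repaired by replacing the ``1D hence subcritical'' claim with this dimension count, but as written the justification for the Palais--Smale condition (and hence for the unbounded sequence of critical values) is incorrect.

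Beyond that, the part you yourself flag as the main obstacle --- showing the abstract critical points are genuinely nodal and pairwise distinct --- is left as a gesture rather than an argument, and it is exactly where the paper does its real work. The paper does not use variational methods at all: it shoots from $r=0$ with $w_\alpha(0)=\alpha$, $w_\alpha'(0)=0$, and the decisive device is the pair of reflection lemmas at $r=\pi/2$ (if $w_\alpha'(\pi/2)=0$ the solution is symmetric, if $w_\alpha(\pi/2)=0$ it is antisymmetric, and in either case $w_\alpha'(\pi)=0$ follows for free), which simultaneously disposes of the singular endpoint at $r=\pi$ and pins down the exact number of zeroes ($2j+1$ in the antisymmetric case, $2(j+1)$ in the symmetric case) via a monotone exhaustion of the sets $A_j=\{\alpha: w_\alpha\ \mbox{has exactly}\ j\ \mbox{zeroes in}\ (0,\pi/2)\}$. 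If you pursue your shooting alternative you will need something playing the role of these symmetry lemmas, since a solution shot from $0$ has no a priori reason to satisfy $w'(\pi)=0$ at the singular orbit; if you pursue the variational route you still owe a proof that infinitely many of the critical points change sign (e.g.\ an a priori energy bound on positive solutions, which for fixed $\lambda$ and subcritical $p$ follows from compactness of the set of positive solutions as in Claim 2 of Section 4, but this must be said). As it stands the proposal has the right reduction and the right intuition, but the two load-bearing steps --- compactness and the nodal count --- are respectively misjustified and missing.
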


We will also prove that the number of positve solutions of the subcritical equation grows as  $\lambda \rightarrow \infty$. As
we mentioned before this gives multiplicity results for the Yamabe equation on certain Riemannian products. We will prove:

\begin{Theorem}\label{positive-solutions-theorem}
 For each $\delta >0$, $p \in (2, p_{2n} )$,  let $\lambda_{k,\delta, p} := \frac{k(k+n-1)}{p-2}\left(1+ \frac{1}{\delta}\right)$.
If $\lambda \in \bigl(\lambda_{k,\delta, p}, \lambda_{k+1,\delta, p}\bigr]$ then
the subcritical equation  on $({\bf S}^n \times {\bf S}^n , g_0^n \times \delta g_0^n )$,

\begin{equation}\label{positive}
- \Delta_{g_0^n + \delta g_0^n} u + \lambda u =\lambda u^{p-1} ,
\end{equation}

\noindent
has at least $k$ positive solutions which are invariant by the diagonal action of $O(n+1)$.
\end{Theorem}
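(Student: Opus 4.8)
The plan is to reduce \eqref{positive} to an ODE and to produce the solutions by bifurcating from the constant solution $u\equiv1$, which solves \eqref{positive} for every $\la>0$. The diagonal action of $O(n+1)$ on ${\bf S}^n\times{\bf S}^n$ has cohomogeneity one; its orbit space is the interval $[0,\pi]$ with coordinate $\theta$, $\cos\theta=\langle x,y\rangle$, the singular orbits being the diagonal ($\theta=0$) and the antidiagonal ($\theta=\pi$). An invariant function is $u=\vr(\theta)$, and since $g_0^n+\de g_0^n$ is a product metric and $\vr$ depends on $x$ and on $y$ only through $\theta=d_{g_0^n}(x,y)$, one computes
$$\Delta_{g_0^n+\de g_0^n}u=(1+\de^{-1})\bigl(\vr''+(n-1)\cot\theta\,\vr'\bigr).$$
Hence, for invariant $u$, equation \eqref{positive} is equivalent to the singular boundary value problem
$$-(1+\de^{-1})\bigl(\vr''+(n-1)\cot\theta\,\vr'\bigr)+\la\vr=\la\,\vr^{p-1},\qquad \theta\in(0,\pi),\quad \vr'(0)=\vr'(\pi)=0,$$
and positive invariant solutions of \eqref{positive} correspond to positive solutions of this problem. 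For the bifurcation analysis it is cleaner to work directly on the manifold: let $F(\la,u)=-\Delta_{g_0^n+\de g_0^n}u+\la u-\la|u|^{p-2}u$ on the space of $O(n+1)$-invariant Hölder functions, so that $F(\la,1)\equiv0$.

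Next I would bifurcate from the branch $\{(\la,1)\}$. The linearisation is $D_uF(\la,1)v=-\Delta_{g_0^n+\de g_0^n} v-\la(p-2)v$, and restricted to invariant functions $-\Delta_{g_0^n+\de g_0^n}$ has simple spectrum $\{j(j+n-1)(1+\de^{-1})\}_{j\ge0}$ --- this follows from the singular Sturm--Liouville problem above, or by Schur's lemma, since invariant vectors in $\mathcal H_j\otimes\mathcal H_\ell$ (with $\mathcal H_k$ the degree-$k$ spherical harmonics of ${\bf S}^n$) occur only for $j=\ell$ and then form a line. The corresponding eigenfunction is $\psi_j(\theta)$, the degree-$j$ Gegenbauer polynomial in $\cos\theta$ --- equivalently the radial eigenfunction of $-\Delta_{g_0^n}$ --- which has exactly $j$ zeros in $(0,\pi)$. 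Thus $D_uF(\la,1)$ is singular exactly for $\la=\la_{j,\de,p}$, with one-dimensional kernel spanned by $\psi_j$, and the Crandall--Rabinowitz transversality condition holds because $\partial_\la D_uF(\la_{j,\de,p},1)\psi_j=-(p-2)\psi_j$ is not $L^2$-orthogonal to $\psi_j$, hence not in the range of the self-adjoint Fredholm operator $D_uF(\la_{j,\de,p},1)$. Therefore, for each $j\ge1$, a curve of nonconstant invariant solutions bifurcates from $(\la_{j,\de,p},1)$, of the form $\vr_s=1+s\psi_j+o(s)$; for small $s\ne0$ this is positive, and $\vr_s-1$ has exactly $j$ zeros in $(0,\pi)$.

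The substantive part is to continue these branches up to the prescribed $\la$. By Rabinowitz's global bifurcation theorem, the connected component $\mathcal C_j$ of the closure of the nonconstant solutions that contains $(\la_{j,\de,p},1)$ is either unbounded or returns to the trivial branch at some $(\la_{j',\de,p},1)$ with $j'\ne j$. The latter is ruled out by a nodal count: a nonconstant solution cannot have a double zero of $\vr-1$ (at such a point $\vr-1$ would satisfy a linear homogeneous ODE with vanishing Cauchy data), and it cannot have $\vr(0)=1$ or $\vr(\pi)=1$ (a solution smooth at a pole is determined by its value there), so the number of zeros of $\vr-1$ is constant along $\mathcal C_j$ and equal to $j$, whereas near $(\la_{j',\de,p},1)$ it would equal $j'$. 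The same uniqueness/maximum-principle reasoning shows positivity can be lost along $\mathcal C_j$ only through a nonnegative solution with a zero, necessarily $\vr\equiv0$; but $\vr\equiv0$ is nondegenerate, hence isolated, for every $\la>0$, so $\mathcal C_j$ consists of positive functions. Since $p<p_{2n}$, equation \eqref{positive} is subcritical and its positive solutions are uniformly bounded for $\la$ in compact subsets of $(0,\infty)$; thus the unbounded continuum $\mathcal C_j$ is unbounded in $\la$, and being connected with $\la_{j,\de,p}$ in its closure, its projection to the $\la$-axis contains $(\la_{j,\de,p},\infty)$. Consequently, for $\la\in(\la_{k,\de,p},\la_{k+1,\de,p}]$ and each $j=1,\dots,k$ there is a point $(\la,\vr_j)\in\mathcal C_j$, and these $\vr_j$ are $k$ positive invariant solutions of \eqref{positive}, pairwise distinct because $\vr_j-1$ has exactly $j$ zeros in $(0,\pi)$.

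The delicate steps are precisely this global continuation: verifying rigorously that the nodal count is preserved along the whole component in spite of the singular endpoints $\theta=0,\pi$, establishing the a priori bounds for positive solutions uniform for $\la$ in compact sets, and excluding loss of positivity along the branch. The remaining ingredients are the cohomogeneity-one reduction and the routine verification of the hypotheses of the Crandall--Rabinowitz and Rabinowitz theorems.
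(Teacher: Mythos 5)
Your strategy is the same as the paper's: reduce to the cohomogeneity-one ODE on $[0,\pi]$, bifurcate from $u\equiv 1$ at the points $\lambda_{j,\delta,p}$ via Crandall--Rabinowitz (simple kernel spanned by the invariant eigenfunction with exactly $j$ zeros), use the Rabinowitz global alternative together with the nodal count of $u-1$ to keep the continua $\mathcal C_j$ disjoint and away from the other bifurcation points, and use a priori bounds for the subcritical problem to force each continuum to reach every $\lambda>\lambda_{j,\delta,p}$. The local analysis (simplicity of the invariant spectrum, transversality, local preservation of the nodal count, positivity along the branch) is sound and matches Section 4 of the paper.

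There is, however, a genuine gap in the global continuation step. From ``$\mathcal C_j$ is unbounded and positive solutions are uniformly bounded for $\lambda$ in compact subsets of $(0,\infty)$'' you conclude that the $\lambda$-projection of $\mathcal C_j$ contains $(\lambda_{j,\delta,p},\infty)$. This does not follow: the continuum could fail to be compact by escaping toward the end $\lambda\to 0^+$ of the parameter domain, which your bounds (stated only on compact subsets of the open half-line) do not control; note that at $\lambda=0$ every positive constant solves the equation, so this end is genuinely degenerate and must be excluded. The paper closes this with the nonexistence theorem of Bidaut-V\'eron and V\'eron: since $({\bf S}^n\times{\bf S}^n , G_{\delta})$ has positive Ricci curvature there is $\rho>0$ such that for $\lambda<\rho$ the only positive solution is the trivial one, which confines $\mathcal C_j$ to $\lambda\ge\rho$; then compactness of the solution set for $\lambda\in[\rho,\lambda_0]$ (proved there by the blow-up argument plus Gidas--Spruck, which you assert but do not carry out) forces the projection to be unbounded above. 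You need to supply this small-$\lambda$ nonexistence input, or an equivalent argument ruling out accumulation of $\mathcal C_j$ at $\lambda=0$, for the proof to close.
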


In Section 2 we will discuss the setting of Yamabe-type equations restricted to the space of functions invariant
by the diagonal action of $O(n+1)$ on ${\bf S}^n \times {\bf S}^n$. We will discuss nodal solutions of the Yamabe equation and prove
Thoerem 1.1 in Section 3. Finally in Section 4 we will consider positive solutions of subcritical equations and
prove Theorem 1.2.

\section{Yamabe-type equations for invariant functions}

We consider $n\geq 2$ and let $g_0^n$ denote the curvature 1 metric on ${\bf S}^n$.
For any $\delta >0$ we consider the Riemannian product  $ G_{\delta}=g_0^n +\delta g_0^n$ and the isometric action
of $O(n+1)$ on $({\bf S}^n \times {\bf S}^n , G_{\delta} )$ given by $A\cdot(x,y)=(Ax,Ay)$.

Let $f \colon {\bf S}^n \times {\bf S}^n \to [-1,1]$ be given by:
$$f(p, q) = \langle p, q \rangle.$$

Note that $f$ is invariant by the action of $O(n+1)$. By a direct computation we obtain

$$\Delta_{G_{\delta}}f = -n\left( 1+ \frac{1}{\delta}\right) f , \;\;\;\;\;\;
   \left|\nabla_{G_\delta}f\right|_{G_\delta}^2 =\left( 1+ \frac{1}{\delta}\right)(1-f^2).$$

This implies that $f$ is  an isoparametric function (see \cite{Wang} for the definition and basic results concerning isoparametric functions). The only critical values of $f$ are its minimum -1 and its maximum 1. Every invariant
function $u\colon {\bf S}^n \times {\bf S}^n \to \re$ can be written as $u = \varphi \circ f$, where $\varphi \colon [-1,1] \to \re$.
Since $f$ is smooth the regularity of $u$ is equal to the regularity of $\varphi$. We have that

$$\Delta_{G_\delta}u = ( \varphi '' \circ f ) . |\nabla_{G_{\delta}} f |^2_{G_{\delta}} + (\varphi ' \circ f) .  \Delta_{G_{\delta}} f  .$$

$$= \left[ \left( 1+ \frac{1}{\delta}\right)(1-t^2)  \   \varphi '' -n\left( 1+ \frac{1}{\delta}\right) t  \  \varphi ' \right] \circ f $$

\noindent
for $t\in [-1,1]$.

Therefore $u$ solves

\begin{equation}\label{ecuaciongeneral-solucionesnodales-SnxSn}
-\Delta_{G_{\delta}} u + \lambda u  = \lambda |u|^{p-2}u \;\;\mbox{on }\;\; {\bf S}^n \times {\bf S}^n
\end{equation}

\noindent
if and only if

\begin{equation}\label{t}
-(1-t^2)\varphi ''(t)+nt\varphi '(t)+\frac{\lambda }{ 1+ \frac{1}{\delta} }\varphi (t)=\frac{\lambda}{ 1+ \frac{1}{\delta}} |\varphi |^{p-2}\varphi .
\end{equation}

If we now call  $w(r) = \varphi (\cos (r))$  then $w'(0) = w'(\pi ) =0$ and $\varphi$ solves   equation (\ref{t})
if and only if

\begin{equation}\label{w(r)}
w''(r)+(n-1)\frac{\cos (r)}{\sin (r)}w'(r) + \frac{\lambda}{ 1+ \frac{1}{\delta}} \left( |w(r)|^{p-2}w(r) -w(r)\right)=0
\end{equation}
with $r \in[0,\pi]$.

For any  $\alpha >0$  we call  $w_{\alpha}: [0,\pi ) \rightarrow \re$  the solution of (\ref{w(r)}) with initial conditions
$$w_{\alpha}(0) =\alpha, \;\;\;\;\;w_{\alpha}'(0) =0.$$

If $w_{\alpha}$ extends up to the singularity at $\pi$ and  $w_{\alpha}'(\pi )=0$, then $\varphi_{\alpha}  (t) = w_{\alpha} (\arccos (t))$ is a $C^2$ function which solves equation
(\ref{t}).
Then $u=\varphi_{\alpha} \circ f$ solves equation (\ref{ecuaciongeneral-solucionesnodales-SnxSn}).

\vspace{.5cm}

When   $\lambda = \frac{s_{G_{\delta}} }{a_{2n}}= \frac{n(n-1)(2n-2)(1+\frac{1}{\delta})}{4(2n-1)} $ and  $ p=\frac{4n}{2n-2}$ equation (\ref{ecuaciongeneral-solucionesnodales-SnxSn}) is the
Yamabe equation for $({\bf S}^n \times {\bf S}^n , G_{\delta} )$. Therefore Theorem \ref{nodal} follows from the following:

\begin{Theorem}Let $p=\frac{4n}{2n-2}$. For any $\lambda >0$ and any integer $k$ there exists $\alpha_k >0$ such that $w_{\alpha_k}'(\pi )=0$
and $w_{\alpha_k}$ has exactly $k$ zeroes on $(0,\pi )$.

\end{Theorem}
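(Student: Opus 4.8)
The plan is to study the ODE initial-value problem \eqref{w(r)} as $\alpha$ ranges over $(0,\infty)$ and to use a shooting argument. First I would record the basic existence/uniqueness theory: for each $\alpha>0$ the solution $w_\alpha$ exists on a maximal interval $[0,R_\alpha)$, depends continuously (indeed smoothly) on $\alpha$ on compact subintervals, and — because the singular terms $(n-1)\cos(r)/\sin(r)$ at $r=0$ and $r=\pi$ are of the standard ``spherical'' type — one extends across $r=\pi$ exactly when $w_\alpha$ stays bounded, in which case automatically $w_\alpha'(\pi)=0$. The critical point is $\alpha=1$: then $w_1\equiv 1$. Near $\alpha=1$ one linearizes \eqref{w(r)} about the constant solution $1$; the linearized operator is $v''+(n-1)\frac{\cos r}{\sin r}v' + \frac{\lambda}{1+1/\delta}(p-2)v=0$, whose solution with $v(0)=1,v'(0)=0$ is a Gegenbauer/Jacobi-type function on $[0,\pi]$ that oscillates, the number of its zeros growing without bound as $\frac{\lambda}{1+1/\delta}(p-2)\to\infty$. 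This suggests that for $\alpha$ slightly less than $1$ the number of zeros of $w_\alpha$ on $(0,\pi)$ is governed by this eigenvalue count, while the subcriticality $p<p_{2n}$ (so that $p=\frac{4n}{2n-2}<\frac{2n}{n-1}=\ldots$ — here $p=\frac{4n}{2n-2}$ is exactly the Yamabe exponent $p_{2n}$ for the $2n$-manifold, so one is at the critical exponent and the energy/Pohozaev bookkeeping must be done with care) plays a role in the global behavior.

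The core of the argument is an \emph{energy/Pohozaev-type monotonicity} combined with a \emph{phase-plane / zero-counting} analysis. I would introduce the energy-type quantity
\[
E_\alpha(r) = \tfrac12 w_\alpha'(r)^2 + \tfrac{\lambda}{1+1/\delta}\Bigl(\tfrac1p|w_\alpha(r)|^p - \tfrac12 w_\alpha(r)^2\Bigr),
\]
for which \eqref{w(r)} gives $E_\alpha'(r) = -(n-1)\frac{\cos r}{\sin r}w_\alpha'(r)^2$. On $(0,\pi/2)$ the factor $\cos r/\sin r$ is positive so $E_\alpha$ is nonincreasing, and on $(\pi/2,\pi)$ it is negative so $E_\alpha$ is nondecreasing; this ``damped then anti-damped'' structure, symmetric about $r=\pi/2$, is what forces solutions that reach $r=\pi$ to do so with $w_\alpha'(\pi)=0$ and makes the shooting map well-behaved. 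The main obstacle — and the heart of the proof — is to show that as $\alpha\to 0^+$ (or along a suitable sequence $\alpha\to 0$), the solution $w_\alpha$, after rescaling $v_\alpha(r)=w_\alpha(\alpha^{?}r)$ or by a direct ODE comparison, accumulates an arbitrarily large number of zeros on $(0,\pi)$: intuitively, for small $\alpha$ the potential term $\frac{\lambda}{1+1/\delta}(|w|^{p-2}-1)w$ behaves like the linear term $-\frac{\lambda}{1+1/\delta}w$ near the axis and produces oscillation, and the number of oscillations on a fixed interval tends to infinity. This is where one must be careful that $p$ is the critical exponent: I expect one needs a blow-up/rescaling limit showing $w_\alpha$ converges (locally) to a solution of a limiting equation on all of $[0,\infty)$ — plausibly the radial critical equation $v''+(2n-1)\frac{v'}{s}+|v|^{p-2}v=0$ or its $[0,\pi]$ analogue — whose bounded solution has infinitely many zeros, and then a continuity argument transfers finitely many of those zeros back to $w_\alpha$ for $\alpha$ small.

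With the oscillation estimate in hand, I would finish by an intermediate-value / continuity argument on the shooting functional. Define, for those $\alpha$ for which $w_\alpha$ extends to $[0,\pi]$ with $w_\alpha'(\pi)=0$, the integer $Z(\alpha)$ = number of zeros of $w_\alpha$ in $(0,\pi)$; more robustly, work with the set $\mathcal{N}_k=\{\alpha>0 : w_\alpha \text{ has at least } k \text{ zeros in } (0,\pi)\}$, which is open by continuous dependence, and track the ``last zero'' function and $w_\alpha'(\pi)$ (or the limiting value $w_\alpha(\pi)$) as functions of $\alpha$. The strategy is: (i) $Z(\alpha)$ is ``small'' (bounded, in fact equal to the linearized count) for $\alpha$ near $1$; (ii) $Z(\alpha)\to\infty$ as $\alpha\to 0^+$; (iii) between consecutive ``good'' parameters the solution sweeps through configurations, and at the boundary of each $\mathcal{N}_k$ one can arrange, by an argument ruling out solutions that ``end'' tangentially at $r=\pi$ without $w'(\pi)=0$ (using the $E_\alpha$ monotonicity and the symmetry about $\pi/2$, or a uniqueness-for-the-singular-endpoint argument), that there is a genuine solution $w_{\alpha_k}$ with $w_{\alpha_k}'(\pi)=0$ and exactly $k$ interior zeros. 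The technical crux I expect to fight with is (ii) together with the extension-to-$\pi$ question at the critical exponent — i.e. ensuring that the parameters producing $k$ zeros actually include ones where the solution reaches the far pole regularly — since at $p=p_{2n}$ one cannot invoke the usual subcritical energy bounds and must instead exploit the specific spherical geometry (the $E_\alpha$ identity above plus the reflection $r\mapsto\pi-r$) to pin down $w_{\alpha_k}'(\pi)=0$.
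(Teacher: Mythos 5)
Your general framework (shooting in $\alpha$, the energy identity $E_\alpha'(r)=-(n-1)\frac{\cos r}{\sin r}w_\alpha'(r)^2$, and zero-counting via continuous dependence) is the right one, but the key oscillation step is pointed in the wrong direction, and this is a genuine gap. You claim that $w_\alpha$ accumulates arbitrarily many zeros as $\alpha\to 0^+$ because ``the potential term behaves like the linear term $-\mu w$ near the axis and produces oscillation.'' This is false: near $w=0$ the equation reads $w''+(n-1)\frac{\cos r}{\sin r}w'\approx \mu w$, which is the \emph{non-oscillatory} sign — zero is a repeller for the phase flow, not a center. Indeed the paper's Lemma \ref{lema-energia} shows that for $0<\alpha\le(p/2)^{1/(p-2)}$ one has $E_\alpha(0)\le 0$ and hence $w_\alpha>0$ on $(0,\pi/2)$: small $\alpha$ gives \emph{no} zeros. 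The unbounded oscillation needed for the proof occurs at the opposite end, $\alpha\to\infty$, where the superlinear term $\mu|w|^{p-2}w$ dominates; the paper imports this from \cite[Theorem 3.1]{JC}, which applies precisely because equation (\ref{w(r)}) is the radial equation of the $n$-sphere and $p=p_{2n}=\frac{4n}{2n-2}<\frac{2n}{n-2}=p_n$ is \emph{subcritical for that $n$-dimensional problem}. This also disposes of your second worry: no blow-up/Pohozaev analysis at the critical exponent is needed, because the relevant critical exponent for the ODE is $p_n$, not $p_{2n}$, and you are strictly below it.

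The second structural difference concerns the endpoint $r=\pi$, which you correctly identify as a crux but leave unresolved (``ruling out solutions that end tangentially at $r=\pi$''). The paper's device, which you mention only in passing, is to never shoot to $\pi$ at all: by uniqueness, if $w_\alpha'(\pi/2)=0$ then $w_\alpha(\pi-t)=w_\alpha(t)$, and if $w_\alpha(\pi/2)=0$ then $w_\alpha(\pi-t)=-w_\alpha(t)$; in either case the solution extends to $[0,\pi]$ with $w_\alpha'(\pi)=0$ automatically, and the zero count on $(0,\pi)$ is $2j$ (symmetric case, $j$ zeros in $(0,\pi/2)$ plus reflections, none at $\pi/2$) or $2j+1$ (antisymmetric case). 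The whole argument then takes place on $(0,\pi/2]$: one defines $A_j=\{\alpha\in(1,\alpha_*]: w_\alpha \mbox{ has exactly } j \mbox{ zeros in } (0,\pi/2)\}$, shows $a_j=\sup A_j$ satisfies $w_{a_j}(\pi/2)=0$ (giving odd $k=2j+1$), and finds $b_j\in(a_j,a_{j+1})$ with $w_{b_j}'(\pi/2)=0$ by an intermediate-value argument on the sign of $w'(\pi/2)$ (giving even $k=2j+2$). Without the reflection lemmas and with the oscillation placed at $\alpha\to 0^+$, your argument as written cannot be completed.
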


Similarly Theorem \ref{positive-solutions-theorem} follows from the next theorem. For any $\delta >0$ and
any $p\in (2,p_{2n} )$ we call $\lambda_k =\frac{k(k+n-1)}{p-2} (1+\delta^{-1})$. Then we have:

\begin{Theorem} For any $p\in (2,p_{2n} )$, $\delta  >0$ and $\lambda \in (\lambda_k ,\lambda_{k+1} ]$ there exist
at least $k$ positive different solutions of
equation (\ref{w(r)}) verifying the boundary conditions  $w'(0)=w ' (\pi )=0$.

\end{Theorem}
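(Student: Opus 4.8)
The plan is to view $\alpha$ as a shooting parameter and count how many times the solution $w_\alpha$ of (\ref{w(r)}) satisfies $w_\alpha'(\pi)=0$, distinguishing solutions by the number of interior zeros of $w_\alpha - 1$ (which is the natural "oscillation count" for positive solutions: a positive solution oscillates around the constant solution $1$ without touching $0$). First I would rewrite the equation in the variable $s$ with $ds = \sin^{-(n-1)}(r)\,dr$ or, more conveniently, work directly with the energy-type quantity
\[
E_\alpha(r) = \tfrac12 w_\alpha'(r)^2 + \frac{\lambda}{1+\delta^{-1}}\Bigl(\frac{|w_\alpha(r)|^p}{p} - \frac{w_\alpha(r)^2}{2}\Bigr),
\]
whose derivative along solutions is $E_\alpha'(r) = -(n-1)\frac{\cos r}{\sin r}\,w_\alpha'(r)^2$, so $E_\alpha$ is nonincreasing on $(0,\pi/2]$ and nondecreasing on $[\pi/2,\pi)$. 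This Lyapunov structure is the standard tool (cf. the treatment of such ODEs near both singular endpoints) for showing $w_\alpha$ extends to $[0,\pi]$, stays bounded, and cannot blow up; near $r=0$ and $r=\pi$ one uses the Frobenius/Fowler analysis of the regular singular point to get a one-parameter family of smooth solutions with $w'(0)=0$ and, at the other end, to make sense of the condition $w_\alpha'(\pi)=0$.

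The core of the argument is an oscillation/continuity count. For small $\alpha$ (near $0$) the nonlinear term is negligible, the equation linearizes to $w'' + (n-1)\cot r\, w' + \frac{\lambda}{1+\delta^{-1}}(p-2)\,(1-w)\approx$ const, and the relevant linearized operator is $-\Delta_{g_0^n}$ on $\mathbf{S}^n$ restricted to zonal functions, whose Neumann eigenvalues on $[0,\pi]$ are exactly $j(j+n-1)$ with eigenfunctions (Gegenbauer polynomials $C_j$) having $j$ interior zeros. Hence when $\lambda(p-2)/(1+\delta^{-1}) > j(j+n-1)$, i.e. $\lambda > \lambda_j$, a solution starting near the constant $1$ will oscillate around $1$ at least $j$ times before $r=\pi$. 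I would make this precise via a Sturm comparison argument: compare (\ref{w(r)}) (or rather the equation for $v = w-1$, linearized and with a sign-definite remainder) against the constant-coefficient comparison equation with eigenvalue $\lambda_j$, to force at least $j$ sign changes of $w_\alpha-1$ on $(0,\pi)$ once $\lambda\in(\lambda_k,\lambda_{k+1}]$ and $j\le k$. Conversely, for $\alpha$ close to $1$, $w_\alpha$ stays near $1$ and has no interior zeros of $w_\alpha-1$ and satisfies the boundary condition only trivially; so the "number of oscillations" jumps from $0$ up to at least $k$ as $\alpha$ ranges over $(0,1)$ (or over the appropriate interval), and at each jump the Neumann condition $w_\alpha'(\pi)=0$ is realized, yielding, for each $j\in\{1,\dots,k\}$, at least one $\alpha$ with $w_\alpha'(\pi)=0$ and $w_\alpha-1$ having exactly $j$ interior zeros. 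Positivity is then recovered from the energy bound: because $E_\alpha$ is controlled and $\lambda/(1+\delta^{-1})$ times the subcritical potential $|w|^p/p - w^2/2$ has its only relevant well structure keeping trajectories that start in $(0,1]$ (or $[1,\infty)$) away from $0$, one checks $w_\alpha>0$ throughout — this is where subcriticality $p<p_{2n}$ is used to rule out finite-time collapse to $0$.

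Assembling: Step 1, local existence and regularity at both singular endpoints via Frobenius; Step 2, the energy identity and the resulting global extension and a priori bounds on $[0,\pi]$, plus positivity; Step 3, the Sturm comparison with the zonal Laplacian spectrum $j(j+n-1)$ to pin the oscillation count of $w_\alpha-1$ from below by $k$ when $\lambda\in(\lambda_k,\lambda_{k+1}]$, and from above (by $0$) as $\alpha\to 1$; Step 4, an intermediate-value/continuous-dependence argument in $\alpha$ to produce, for each $j=1,\dots,k$, a solution $w_{\alpha_j}$ with $w_{\alpha_j}'(\pi)=0$ and exactly $j$ interior zeros of $w_{\alpha_j}-1$; these $k$ solutions are pairwise distinct because they have different oscillation numbers, proving the theorem.

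The step I expect to be the main obstacle is Step 3 combined with Step 4: making the oscillation count both \emph{monotone enough} and \emph{continuous enough} in $\alpha$ to guarantee it actually passes through every intermediate value $1,2,\dots,k$ — the count can in principle fail to be monotone, so one typically argues instead that the set of $\alpha$ for which $w_\alpha'(\pi)=0$ is nonempty in each "oscillation band" by a degree or continuity argument, and one must carefully handle the singular endpoint $r=\pi$ where $\cot r\to\pm\infty$ so that "$w_\alpha'(\pi)=0$" is the correct well-posed shooting target and the dependence $\alpha\mapsto(w_\alpha(\pi),w_\alpha'(\pi))$ (suitably interpreted) is continuous. The subcritical hypothesis $p<p_{2n}$ enters precisely to keep all shooting trajectories globally defined and bounded away from $0$, which is what makes the counting argument clean; at the critical exponent one loses this and is forced into the nodal setting of the previous theorem.
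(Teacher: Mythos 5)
Your strategy (shooting in $\alpha=w(0)$ and counting oscillations of $w_\alpha-1$ about the constant solution) is genuinely different from the paper's proof, which is a bifurcation argument: the linearization of $S(u,\lambda)=-\Delta_{G_\delta}u+\lambda(u-u^{p-1})$ at $u=1$ has one-dimensional kernel spanned by the zonal eigenfunction $w_{\beta_k}$ exactly at $\lambda=\lambda_k$, Crandall--Rabinowitz gives local branches, Rabinowitz's global bifurcation theorem makes each branch $C_k$ non-compact, the zero count of $u-1$ keeps the branches disjoint, and a priori bounds (blow-up plus Gidas--Spruck, together with Bidaut--V\'eron's nonexistence for small $\lambda$) force $C_k$ to reach every $\lambda>\lambda_k$. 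A shooting proof in the spirit you describe is not unreasonable (compare \cite{Petean2}), but as written it has two concrete problems. First, your endpoint counts in Step 3 are reversed. For $\alpha$ near $1$, $w_\alpha-1$ is approximated by $(\alpha-1)$ times the solution $v$ of the linearized equation $H_{\mu(p-2)}(v)=0$, $v(0)=1$, $v'(0)=0$, with $\mu=\lambda/(1+\delta^{-1})$; when $\lambda>\lambda_k$, i.e.\ $\mu(p-2)>\beta_k=k(k+n-1)$, Sturm comparison with $w_{\beta_k}$ forces $v$ to have at least $k$ simple zeros in a compact subinterval of $(0,\pi)$, so near $\alpha=1$ the oscillation count is at least $k$, not $0$. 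It is near $\alpha=0$ (where the linearization is the non-oscillatory $-\mu w$) that the count is small. This is not a mere relabeling: the hypothesis $\lambda>\lambda_k$ enters \emph{only} through the behavior near $\alpha=1$, so your comparison argument "for small $\alpha$" cannot produce the required $k$ oscillations.

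Second, and this is the essential gap, you assert that "at each jump of the count the Neumann condition $w_\alpha'(\pi)=0$ is realized" without supplying a mechanism. Because the constant $1$ is itself a solution, ODE uniqueness shows $w_\alpha-1$ can never acquire an interior double zero, so the count can only change through the singular endpoint $r=\pi$, where generic solutions are unbounded; nothing in your argument shows that at a transition value of $\alpha$ one lands on the codimension-one set $\{w'(\pi)=0\}$ rather than on a solution that blows up at $\pi$. The paper's own device for exactly this difficulty (used in the proof of Theorem 2.1) is to shoot only to the regular midpoint $r=\pi/2$ and invoke the reflection symmetry: $w_\alpha'(\pi/2)=0$ forces $w_\alpha(\pi-t)=w_\alpha(t)$ and hence $w_\alpha'(\pi)=0$. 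You never use this (or any substitute such as the global bifurcation machinery), so Step 4 does not close. A minor further point: subcriticality is not what keeps trajectories positive --- the energy lemma does that for any $p>2$ once $\alpha\le(p/2)^{1/(p-2)}$; in the paper $p<p_{2n}$ is used for the a priori bounds via Gidas--Spruck and for the nonexistence result at small $\lambda$.
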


Theorem 2.1 will be proved in Section 3 and Theorem 2.2 will be proved in Section 4. To finish this section we introduce
the energy functional. Let $\mu= \frac{\lambda}{ 1+ \frac{1}{\delta}}$ and

$$E_{\alpha}(r):=\frac{1}{2}(w'_{\alpha}(r))^2+ \mu \left(\frac{|w_{\alpha}(r)|^p}{p}-\frac{w^2_{\alpha}(r)}{2}\right)$$

Note that

$$
E'_{\alpha}(r) = -(n-1)\frac{\cos (r)}{\sin (r)} (w_{\alpha } '(t))^2  ,
$$

\noindent
therefore $E_{\alpha}$ is decreasing in $(0,\pi/2)$ and increasing in $(\pi /2 ,\pi )$.

Note also that if $w_{\alpha} (r_0 )=0$ then $E_{\alpha}(r_0 ) =\frac{1}{2} (w_{\alpha}'(r_0))^2 \geq 0$
(and the equality holds if and only if $\alpha =0$). For instance this implies

\begin{Lemma} \label{lema-energia} If  $\alpha >0$ is such that  $E_{\alpha}(0)\leq 0$ then
 $w_{\alpha}(r)>0$, for all  $r \in (0,\frac{\pi}{2})$.
\end{Lemma}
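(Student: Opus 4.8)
The plan is to rule out the possibility that $w_\alpha$ reaches zero inside $(0,\pi/2)$ by tracking the energy $E_\alpha$. First I would record the relevant monotonicity: from the formula $E_\alpha'(r) = -(n-1)\frac{\cos(r)}{\sin(r)}(w_\alpha'(r))^2$ we see that $E_\alpha$ is nonincreasing on $(0,\pi/2)$, since $\cos(r)/\sin(r) > 0$ there. Hence if $E_\alpha(0) \le 0$ then $E_\alpha(r) \le 0$ for all $r \in [0,\pi/2)$. (One has to be a little careful at $r=0$ because of the singular coefficient, but $w_\alpha'(0)=0$ and the standard ODE theory quoted in the paragraph defining $w_\alpha$ gives a genuine $C^1$ solution on $[0,\pi)$, so $E_\alpha(0)$ makes sense and $E_\alpha$ is continuous up to $0$.)

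Next I would argue by contradiction: suppose $r_0 \in (0,\pi/2)$ is the first point with $w_\alpha(r_0) = 0$. By the remark immediately preceding the lemma, at any zero of $w_\alpha$ we have $E_\alpha(r_0) = \tfrac12(w_\alpha'(r_0))^2 \ge 0$, with equality only if $\alpha = 0$. Since $\alpha > 0$, in fact $w_\alpha'(r_0) \ne 0$: indeed if both $w_\alpha(r_0)=0$ and $w_\alpha'(r_0)=0$, uniqueness for the ODE (away from the singularity, and $r_0 < \pi/2 < \pi$) would force $w_\alpha \equiv 0$, contradicting $w_\alpha(0)=\alpha>0$. Therefore $E_\alpha(r_0) = \tfrac12(w_\alpha'(r_0))^2 > 0$. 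But we showed $E_\alpha(r_0) \le E_\alpha(0) \le 0$, a contradiction. Hence $w_\alpha$ has no zero in $(0,\pi/2)$, and since $w_\alpha(0) = \alpha > 0$ and $w_\alpha$ is continuous, it stays positive on $(0,\pi/2)$.

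The only subtle point is the behavior at the singular endpoint $r=0$: one needs that $E_\alpha$ is continuous there and that uniqueness of solutions still applies at interior points $r_0 \in (0,\pi)$ despite the singular coefficient $\cos(r)/\sin(r)$. Both are standard — the coefficient is smooth on $(0,\pi)$ so Picard–Lindelöf applies at any $r_0$ in the open interval, and the construction of $w_\alpha$ already presupposes a well-defined $C^1$ (indeed $C^2$) solution on $[0,\pi)$ — so I expect this to be the main thing to state carefully rather than a genuine obstacle. The energy estimate itself is immediate once the monotonicity of $E_\alpha$ and the sign of $E_\alpha$ at zeros of $w_\alpha$ are in hand.
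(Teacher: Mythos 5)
Your proof is correct and follows exactly the argument the paper intends: the monotonicity of $E_{\alpha}$ on $(0,\pi/2)$ forces $E_{\alpha}(r)\leq 0$ there, while at any zero $r_0$ of $w_{\alpha}$ with $\alpha>0$ one has $E_{\alpha}(r_0)=\tfrac12(w_{\alpha}'(r_0))^2>0$ by uniqueness of solutions, a contradiction. Your added care about continuity of $E_{\alpha}$ at the singular endpoint $r=0$ and about where Picard--Lindel\"of applies is a reasonable elaboration of details the paper leaves implicit.
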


Note that
$$
E_{\alpha}(0) = \frac{1}{2}(w'_{\alpha}(0))^2+\mu \left(\frac{|w_{\alpha}(0)|^p}{p}-\frac{w^2_{\alpha}(0)}{2}\right)
= \mu \left(\frac{\alpha^p}{p}-\frac{\alpha^2}{2}\right)
=\mu \frac{\alpha^2}{p}\left(\alpha^{p-2} -\frac{p}{2}\right)
$$

\noindent
and the lemma says that if $w_{\alpha}$ has a zero in  $(0,\frac{\pi}{2})$, then $\alpha > \left(\frac{p}{2}\right)^{\frac{1}{p-2}}$.

\section{Proof of Theorem 2.1}

We fix $p=p_{2n}$ in equation (\ref{w(r)}).
We begin with some elementary lemmas concerning equation (\ref{w(r)}). We call
$\mu= \frac{\lambda}{ 1+ \frac{1}{\delta}}$.

\begin{Lemma} \label{w-simetrica}
Let  $\alpha > 0$ be such that the solution   $w_{\alpha}$ of (\ref{w(r)}) satisfies $ w_{\alpha}'\left(\frac{\pi}{2}\right) =0$
then $w_{\alpha} (\pi  -t)= w_{\alpha} (t)$  for all $t\in [0, \pi )$ and therefore $w_{\alpha} '(\pi) =0$.
\end{Lemma}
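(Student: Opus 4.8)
The plan is to exploit the reflection symmetry $r \mapsto \pi - r$ of the equation. Define $v(r) := w_\alpha(\pi - r)$ on $(0,\pi)$ and observe that, since $\cos(\pi - r)/\sin(\pi - r) = -\cos(r)/\sin(r)$, substituting $v$ into the left-hand side of (\ref{w(r)}) produces
\[
v''(r) + (n-1)\frac{\cos(r)}{\sin(r)}v'(r) + \mu\bigl(|v(r)|^{p-2}v(r) - v(r)\bigr),
\]
so $v$ solves the \emph{same} equation (\ref{w(r)}) on $(0,\pi)$. The hypothesis $w_\alpha'(\pi/2) = 0$ gives $v(\pi/2) = w_\alpha(\pi/2) =: \beta$ and $v'(\pi/2) = -w_\alpha'(\pi/2) = 0$. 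Thus $v$ and $w_\alpha$ satisfy the same second-order ODE with the same Cauchy data at the interior regular point $r = \pi/2$.

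Next I would invoke uniqueness for the initial value problem at $r = \pi/2$: on $(0,\pi)$ the coefficient $(n-1)\cos(r)/\sin(r)$ is smooth and the nonlinearity $\mu(|w|^{p-2}w - w)$ is locally Lipschitz in $w$ (here $p = p_{2n} > 2$, so $|w|^{p-2}w$ is $C^1$), so standard ODE theory gives a unique solution near $r = \pi/2$ with the prescribed value and derivative. Hence $v \equiv w_\alpha$ on the maximal interval of existence around $\pi/2$, which by the symmetry of the situation and the already-known existence of $w_\alpha$ on $[0,\pi)$ extends to give $w_\alpha(\pi - r) = w_\alpha(r)$ for all $r \in (0,\pi)$, and then by continuity for $r = 0$ as well (both sides equal, using that $w_\alpha$ extends to $r=\pi$ under the lemma's implicit assumption, or more carefully that the identity on $(0,\pi)$ forces $w_\alpha$ to extend to $\pi$ with $w_\alpha(\pi) = w_\alpha(0) = \alpha$). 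Differentiating $w_\alpha(\pi - r) = w_\alpha(r)$ and evaluating at $r = 0$ gives $-w_\alpha'(\pi) = w_\alpha'(0) = 0$, which is the desired conclusion $w_\alpha'(\pi) = 0$.

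The one genuine subtlety, rather than an obstacle, is the behavior at the endpoints $r = 0$ and $r = \pi$, where the coefficient $(n-1)\cos(r)/\sin(r)$ is singular: the uniqueness argument must be run at the interior point $\pi/2$, never at the singular endpoints, and one should note that both $w_\alpha$ (by construction, $w_\alpha(0) = \alpha$, $w_\alpha'(0) = 0$) and its reflection are solutions regular at the respective endpoints, so the identification on $(0,\pi)$ propagates the regularity to the other end. I would write this up compactly: set up $v$, check it solves (\ref{w(r)}), match Cauchy data at $\pi/2$, quote ODE uniqueness on the regular interior, conclude $v = w_\alpha$, hence symmetry, hence $w_\alpha'(\pi) = 0$.
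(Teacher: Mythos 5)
Your proposal is correct and follows essentially the same route as the paper: reflect $w_\alpha$ through $\pi/2$, check the reflection solves the same equation, match the Cauchy data at the regular interior point $\pi/2$, and invoke ODE uniqueness to conclude the symmetry and hence $w_\alpha'(\pi)=0$. The extra care you take about the singular endpoints and the extension to $r=\pi$ is a reasonable elaboration of what the paper leaves implicit, but the argument is the same.
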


\begin{proof}
Let  $h(t) = w_{\alpha} (\pi-t) $ with $t \in [0,  \pi)$. Note that $h$ is a solution of equation (\ref{w(r)}).
Moreover
$h\left( \frac{\pi}{2}\right) = w_{\alpha} \left( \frac{\pi}{2}\right)$,  $ h'\left( \frac{\pi}{2}\right) = w_{\alpha}'\left( \frac{\pi}{2}\right) =0$.
By the uniqueness of solutions $h=w_{\alpha}$.
Therefore $w_{\alpha} (\pi - t )=h(t) =w_{\alpha} (t)$, proving the lemma.
\end{proof}

\begin{Lemma} \label{w-antisimetrica}
Let $\alpha > 0$ be such that the solution  $w_{\alpha}$ de (\ref{w(r)}) satisfies $ w_{\alpha}\left(\frac{\pi}{2}\right) =0$.
Then $w_{\alpha}(\pi  -t)= -w_{\alpha} (t)$  for all $t\in [0, \pi )$ and therefore $w_{\alpha} '(\pi) =0$.
\end{Lemma}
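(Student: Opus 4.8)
The plan is to mimic the argument of Lemma \ref{w-simetrica}, exploiting the fact that equation (\ref{w(r)}) is odd in $w$. Set $h(t) = -w_{\alpha}(\pi - t)$ for $t \in [0,\pi)$. First I would check that $h$ solves (\ref{w(r)}): writing $s = \pi - t$, we have $h(t) = -w_{\alpha}(s)$, $h'(t) = w_{\alpha}'(s)$, $h''(t) = -w_{\alpha}''(s)$, and $\frac{\cos(\pi-t)}{\sin(\pi-t)} = -\frac{\cos s}{\sin s}$; substituting into the left-hand side of (\ref{w(r)}) gives
$$-w_{\alpha}''(s) + (n-1)\frac{\cos s}{\sin s}w_{\alpha}'(s) + \mu\bigl(|-w_{\alpha}(s)|^{p-2}(-w_{\alpha}(s)) - (-w_{\alpha}(s))\bigr),$$
which is $-1$ times the left-hand side of (\ref{w(r)}) evaluated at $s$ (using $|-w|^{p-2}(-w) = -|w|^{p-2}w$), hence vanishes because $w_{\alpha}$ solves the equation. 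So $h$ is indeed a solution.

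Next I would match initial data at the midpoint $r = \pi/2$: since $\pi - \pi/2 = \pi/2$, we get $h(\pi/2) = -w_{\alpha}(\pi/2) = 0 = w_{\alpha}(\pi/2)$ by hypothesis, and $h'(\pi/2) = w_{\alpha}'(\pi/2)$. Thus $h$ and $w_{\alpha}$ are two solutions of (\ref{w(r)}) with the same value and derivative at $\pi/2$; since (\ref{w(r)}) is a second-order ODE with smooth coefficients away from $0$ and $\pi$, uniqueness of solutions to the initial value problem on the interval $(0,\pi)$ forces $h = w_{\alpha}$ there, and by continuity on $[0,\pi)$. Therefore $w_{\alpha}(\pi - t) = -h(t) = -w_{\alpha}(t)$ for all $t \in [0,\pi)$.

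Finally, to conclude $w_{\alpha}'(\pi) = 0$ I would differentiate the relation $w_{\alpha}(\pi - t) = -w_{\alpha}(t)$, obtaining $-w_{\alpha}'(\pi - t) = -w_{\alpha}'(t)$, i.e. $w_{\alpha}'(\pi - t) = w_{\alpha}'(t)$; evaluating at $t = 0$ and using $w_{\alpha}'(0) = 0$ gives $w_{\alpha}'(\pi) = 0$, provided the solution extends to $\pi$ — and the symmetry relation itself shows it does, since $w_{\alpha}$ near $\pi$ is controlled by $w_{\alpha}$ near $0$. The only mild subtlety, and the one point deserving care, is the behaviour at the singular endpoints $0$ and $\pi$: the coefficient $(n-1)\cos(r)/\sin(r)$ blows up there, so "uniqueness of solutions" must be invoked on the open interval $(0,\pi)$ where the equation is regular, and one must separately note (as already done in the excerpt, via $w_{\alpha}'(0) = 0$ and the standing assumption that $w_{\alpha}$ extends past the singularity) that the extension to the endpoint and the vanishing of the derivative there are legitimate. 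This is exactly parallel to Lemma \ref{w-simetrica} and presents no real obstacle.
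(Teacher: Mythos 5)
Your proposal is correct and follows exactly the paper's argument: define $h(t)=-w_{\alpha}(\pi-t)$, verify it solves (\ref{w(r)}) using the oddness of the nonlinearity and of $\cos$ about $\pi/2$, match the data at $\pi/2$, and conclude by uniqueness. The extra details you supply (the explicit substitution and the differentiation of the symmetry relation to get $w_{\alpha}'(\pi)=0$) are just elaborations of steps the paper leaves implicit.
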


\begin{proof}
Let  $h(t) = -w_{\alpha} (\pi-t) $ with $t \in [0, \pi)$. Note that $h$ is a solution of equation  (\ref{w(r)}).
Since
$h\left( \frac{\pi}{2}\right) = w_{\alpha} \left( \frac{\pi}{2}\right) =0$, $ h'\left( \frac{\pi}{2}\right) = w_{\alpha}'\left( \frac{\pi}{2}\right) $
it follows from the uniqueness of solutions that $h=w_{\alpha}$, proving the lemma.

\end{proof}

\begin{Lemma}{\label{lema-numerodeceros-w-alpha1}}
Let $\alpha_0 > 1$ be such that the solution  $w_{\alpha_0}$ of  (\ref{w(r)}) has
exactly $k$ zeroes in  $\left(0, \frac{\pi}{2}\right)$ and
 $ w_{\alpha_0}\left(\frac{\pi}{2}\right) \neq 0$.  Then there exists $\varepsilon > 0$ such that for any
 $\alpha \in (\alpha_0 - \varepsilon , \alpha_0 +\varepsilon   )$ the solution $w_{\alpha}$
 has exactly  $k $ zeroes in  $\left(0, \frac{\pi}{2}\right)$ and $w_{\alpha} (\pi /2 ) \neq 0$.
\end{Lemma}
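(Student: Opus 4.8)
The plan is to deduce the statement from continuous dependence of $w_\alpha$ on the initial height $\alpha$, together with a transversality argument at the zeros. First I would record that every zero of $w_{\alpha_0}$ in $(0,\pi/2)$ is simple: if some $r_\ast\in(0,\pi/2)$ satisfied $w_{\alpha_0}(r_\ast)=w_{\alpha_0}'(r_\ast)=0$, then, since the coefficient $(n-1)\cos(r)/\sin(r)$ of equation (\ref{w(r)}) is smooth near $r_\ast$, uniqueness for the (regular) Cauchy problem at $r_\ast$ would force $w_{\alpha_0}\equiv 0$, contradicting $w_{\alpha_0}(0)=\alpha_0>0$. Hence we may list the zeros of $w_{\alpha_0}$ in $(0,\pi/2)$ as $0<r_1<\dots<r_k<\pi/2$ with $w_{\alpha_0}'(r_i)\neq 0$ for each $i$; moreover $w_{\alpha_0}>0$ on $[0,r_1)$, and $w_{\alpha_0}(\pi/2)\neq 0$ by hypothesis.

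Next I would establish that $\alpha\mapsto w_\alpha$ is continuous into $C^1([0,\pi/2])$ near $\alpha_0$. On $[\eta,\pi/2]$ with $\eta>0$ this is the classical continuous-dependence theorem, so the only delicate point — which I expect to be the main obstacle — is uniform control near the singular endpoint $r=0$. For this I would pass to the integral form of (\ref{w(r)}): multiplying by $\sin^{n-1}(r)$ and integrating twice, $w_\alpha$ solves
$$
w_\alpha(r)=\alpha-\mu\int_0^r \sin^{1-n}(\tau)\int_0^\tau \sin^{n-1}(s)\bigl(|w_\alpha(s)|^{p-2}w_\alpha(s)-w_\alpha(s)\bigr)\,ds\,d\tau ,
$$
and similarly for $w_\alpha'$. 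Uniform a priori bounds for $|w_\alpha|$ and $|w_\alpha'|$ on $[0,\pi/2]$, depending continuously on $\alpha$, follow from the monotonicity of the energy $E_\alpha$ on $(0,\pi/2)$ and the coercivity of $t\mapsto |t|^p/p-t^2/2$ (this also shows $w_\alpha$ is defined on all of $[0,\pi/2]$ for $\alpha$ near $\alpha_0$); a Gronwall estimate on the integral equation then gives $\|w_\alpha-w_{\alpha_0}\|_{C^1([0,\pi/2])}\to 0$ as $\alpha\to\alpha_0$.

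Finally I would run the counting argument. Choose $\eta>0$ so small that the intervals $I_i:=[r_i-\eta,r_i+\eta]$ are pairwise disjoint, contained in $(0,\pi/2)$, and $w_{\alpha_0}'\neq 0$ on each $I_i$; then $w_{\alpha_0}$ is strictly monotone on $I_i$ and changes sign across it. On the compact set $J:=[0,\pi/2]\setminus\bigcup_i(r_i-\eta,r_i+\eta)$ the function $w_{\alpha_0}$ has no zero, so $|w_{\alpha_0}|\geq c>0$ on $J$; note $\pi/2\in J$ and the endpoints $r_i\pm\eta$ lie in $J$. By the $C^1$-convergence just established, there is $\varepsilon>0$ so that for all $\alpha\in(\alpha_0-\varepsilon,\alpha_0+\varepsilon)$ one has $|w_\alpha|>0$ on $J$ (in particular $w_\alpha(\pi/2)\neq 0$), while on each $I_i$ the derivative $w_\alpha'$ keeps the sign of $w_{\alpha_0}'$ and the values $w_\alpha(r_i-\eta)$, $w_\alpha(r_i+\eta)$ keep the (opposite) signs of those of $w_{\alpha_0}$. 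Thus $w_\alpha$ is strictly monotone on $I_i$ and has exactly one zero there, and no zero outside $\bigcup_i I_i$ in $(0,\pi/2)$. Counting, $w_\alpha$ has exactly $k$ zeros in $(0,\pi/2)$ and $w_\alpha(\pi/2)\neq 0$, which is the claim.
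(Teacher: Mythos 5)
Your proof is correct and follows essentially the same route as the paper's: isolate the $k$ simple zeros in small intervals where the derivative has a fixed sign, bound $|w_{\alpha_0}|$ away from zero on the complementary compact set, and conclude by continuous dependence on $\alpha$. The only difference is that you carefully justify the $C^1$-continuous dependence up to the singular endpoint $r=0$ via the integral form of the equation, a point the paper's proof takes for granted.
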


\begin{proof} Let $0< z_1 <...,z_k <\pi /2$ be the k zeroes of $w_{\alpha_0}$ in $(0,\pi /2 )$. Let $\delta >0$ be
small enough so that $w_{\alpha_0} ' (t) \neq 0$ for any $i=1,...,k$ and any $t\in [z_i -\delta , z_i + \delta ]$. For
$\varepsilon >0$ small enough we can assume that for any $\alpha \in (\alpha_0 - \varepsilon , \alpha_0 +\varepsilon   )$
we have that $w_{\alpha} >0 $ in $[0,z_1 -\delta ]$, $w_{\alpha}'  <0 $ in $[z_1 - \delta , z_1 + \delta ]$,
$w_{\alpha} <0 $ in $[z_1 + \delta ,z_2 -\delta ]$, $w_{\alpha}'  >0 $ in $[z_2 - \delta , z_2 + \delta ]$, and so on.
It follows that $w_{\alpha}$ has exactly $k$ zeroes in $(0,\pi /2 )$.
\end{proof}

\begin{Lemma}{\label{lema-numerodeceros-w-alpha2}}
 Let $\alpha_0 > 1$ be such that the solution $w_{\alpha_0}$ of equation  (\ref{w(r)}) has
exactly  $k$ zeroes in $\left(0, \frac{\pi}{2}\right)$ and
 $ w_{\alpha_0}\left(\frac{\pi}{2}\right) =0$.  Then there exists  $\varepsilon > 0$ such that for any
 $\alpha \in (\alpha_0 - \varepsilon , \alpha_0 +\varepsilon   )$ the solution $w_{\alpha}$
  has either exactly $k $ zeroes or exactly  $k+1$ zeroes in  $\left(0, \frac{\pi}{2}\right)$.
\end{Lemma}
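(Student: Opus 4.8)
The plan is to run the same perturbation argument as in the proof of Lemma~\ref{lema-numerodeceros-w-alpha1}, with one additional control interval centered at $\pi/2$ to account for the zero of $w_{\alpha_0}$ sitting there. The only genuinely new feature — and the reason the conclusion weakens to ``$k$ or $k+1$'' instead of a fixed count — is that after perturbing, the zero of $w_\alpha$ near $\pi/2$ may lie on either side of $\pi/2$.

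First I would observe that every zero of $w_{\alpha_0}$ in $(0,\pi/2]$ is simple: if $w_{\alpha_0}(r_0)=0$, then by the identity recorded after Lemma~\ref{lema-energia} one has $E_{\alpha_0}(r_0)=\frac{1}{2}(w_{\alpha_0}'(r_0))^2$, which is strictly positive since $\alpha_0\neq 0$, so $w_{\alpha_0}'(r_0)\neq 0$. In particular, writing $0<z_1<\dots<z_k<\pi/2$ for the zeroes of $w_{\alpha_0}$ in $(0,\pi/2)$, we get $w_{\alpha_0}'(z_i)\neq 0$ for each $i$ and $w_{\alpha_0}'(\pi/2)\neq 0$.

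Next I would fix $\delta>0$ small enough that the intervals $J_i:=[z_i-\delta,z_i+\delta]$ $(i=1,\dots,k)$ and $J_{k+1}:=[\pi/2-\delta,\pi/2+\delta]$ are pairwise disjoint and contained in $(0,\pi)$, that $w_{\alpha_0}'$ does not vanish on any $J_i$, and that $w_{\alpha_0}$ does not vanish on the complementary intervals $I_0:=[0,z_1-\delta]$, $I_j:=[z_j+\delta,z_{j+1}-\delta]$ $(1\leq j\leq k-1)$, $I_k:=[z_k+\delta,\pi/2-\delta]$ (on $I_0$ it is in fact positive, since $w_{\alpha_0}(0)=\alpha_0>0$); then $w_{\alpha_0}$ is strictly monotone on each $J_i$ and changes sign across it. Using the same continuous dependence of $w_\alpha$ on $\alpha$ that is already invoked in Lemma~\ref{lema-numerodeceros-w-alpha1} (uniform on $[0,\pi/2+\delta]$, including near the singular endpoint $0$), I would then pick $\varepsilon>0$ so that for every $\alpha\in(\alpha_0-\varepsilon,\alpha_0+\varepsilon)$: $w_\alpha$ keeps the constant nonzero sign of $w_{\alpha_0}$ on each $I_j$; $w_\alpha'$ keeps the constant nonzero sign of $w_{\alpha_0}'$ on each $J_i$; and $w_\alpha$ keeps the signs of $w_{\alpha_0}$ at the two endpoints of each $J_i$.

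The count is then immediate. For such $\alpha$, $w_\alpha$ has no zero on any $I_j$; on each $J_i$ with $i\leq k$ it is strictly monotone with endpoint values of opposite sign, hence has exactly one zero, which lies in $(0,\pi/2)$; and on $J_{k+1}$ it likewise has exactly one zero $r_\alpha\in(\pi/2-\delta,\pi/2+\delta)$. Since $(0,\pi/2)\subset I_0\cup J_1\cup I_1\cup\dots\cup J_k\cup I_k\cup(\pi/2-\delta,\pi/2)$ and $(\pi/2-\delta,\pi/2)\subset J_{k+1}$, these are all the zeroes of $w_\alpha$ in $(0,\pi/2)$, so $w_\alpha$ has exactly $k+1$ of them when $r_\alpha<\pi/2$ and exactly $k$ of them when $r_\alpha\geq\pi/2$. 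I do not expect a substantial obstacle in this argument; the one ingredient that needs (routine) care is continuous dependence of $w_\alpha$ on $\alpha$ all the way up to the singular point $0$, which is precisely what the preceding lemmas are already using.
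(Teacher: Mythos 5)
Your argument is correct and is essentially the paper's proof: both isolate the zero at $\pi/2$ with a small interval $[\pi/2-\delta,\pi/2+\delta]$ on which the derivative is bounded away from zero (so exactly one zero survives perturbation, landing on either side of $\pi/2$), and both count the remaining $k$ zeroes on $[0,\pi/2-\delta]$ by the mechanism of Lemma \ref{lema-numerodeceros-w-alpha1}. You merely spell out the interval decomposition that the paper invokes by reference to the previous lemma.
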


\begin{proof} Since $ w_{\alpha_0}\left(\frac{\pi}{2}\right) =0$ and $\alpha_0 \neq 0$ we have that
$ w_{\alpha_0} ' \left(\frac{\pi}{2}\right) \neq 0$. Let $\delta >0$ be small enough so that
$ w_{\alpha_0} ' (t) \neq 0$ if $t\in [\pi / 2 -\delta , \pi /2 + \delta]$. Choose $\varepsilon >0$ small enough
so that  for any $\alpha \in (\alpha_0 -\varepsilon , \alpha_0 + \varepsilon )$ $ w_{\alpha} ' (t) \neq 0$ for any
$t\in [\pi / 2 -\delta , \pi /2 + \delta]$ and $w_{\alpha}$ has exactly one zero in $[\pi / 2 -\delta , \pi /2 + \delta]$.
By the same argument as in the previous lemma we can also assume that
$\varepsilon$ is small enough so that for any  $\alpha \in (\alpha_0 -\varepsilon , \alpha_0 + \varepsilon )$ the
solution $w_{\alpha}$ has exactly $k$ zeroes in $[0, \pi /2 - \delta ]$. Therefore $w_{\alpha}$ has either $k$ or $k+1$ in
$(0, \pi /2 )$, depending on whether  its zero in $[\pi /2 -\delta , \pi /2 + \delta ]$ is $< \pi /2$ or not.
\end{proof}

Now we are ready to prove Theorem 2.1.

\begin{proof}Note that we are considering equation (\ref{w(r)}) with $p=p_{2n}<p_n$.
Consider an integer  $i>>k$. It then follows from  \cite[Theorem 3.1]{JC} that there exists  $\alpha_* >1 $
such that $w_{\alpha_*}$ has at least  $i$ zeroes in  $\left(0, \frac{\pi}{2}\right)$.

First consider the set
$$A_0 := \left\{ \alpha \in (1, \alpha_* ] :  \;\; w_{\alpha} \geq 0\; \;\mbox{in}\; \; \left(0,\frac{\pi}{2}\right] \right\}.$$

Note that by Lemma \ref{lema-energia}  $(1,(\frac{p}{2} )^{\frac{1}{p-2}} ] \subset A_0$ and that $A_0$
is closed in  $(1, \alpha_* ]$.
Let $a_0 := \sup A_0$. If $t\in (0, \pi /2 )$ and $w_{a_0} (t)=0$ then $t$ would be a local minimum for $w_{a_0}$ and
therefore $w_{a_0} ' (t) =0$. By uniqueness we would have that $w_{a_0 } \equiv 0$, which is a contradiction. Then $w_{a_0}$
is strictly positive in $[0,\pi /2 )$. Moreover $w_{a_0} \left(\frac{\pi}{2}\right) =0$, since
$a_0$ is the supremum of $A_0$. Therefore by Lemma 3.2 $w_{a_0} ' (\pi ) =0$ and $w_{a_0}$ has exactly one zero in
$[0,\pi ]$.

Now define
$$A_1 := \left\{ \alpha \in (1, \alpha_* ] : \;\; w_{\alpha}  \; \;\mbox{has exactly one zero in }\; \; \left(0,\frac{\pi}{2}\right) \right\}$$

If $a_0 < \alpha \leq \alpha_* $ then $w_{\alpha} $ has at least one zero in  $\left( 0, \frac{\pi}{2}\right)$. Therefore by Lemma 3.4
there exists $a>a_0$ such that
$w_{a} $ has exactly one zero in  $\left( 0, \frac{\pi}{2}\right)$.
Therefore
$A_1 \neq \emptyset$ and it is bounded.
Let $a_1 := \sup A_1$.
Note that $a_1 > a_0$. By Lemma 3.3 $w_{a_1} (\pi /2 )=0$. Since $a_1 >a_0$ it follows that
$w_{a_1}$ has exactly one zero in  $\left(0, \frac{\pi}{2}\right)$.

Now for any $j\geq 2$, $j<i$  define

$$A_j := \left\{ \alpha \in (1, \alpha_* ] : \;\; w_{\alpha}  \; \;\mbox{has exactly j zeroes  in }\; \; \left(0,\frac{\pi}{2}\right) \right\}$$

Assume that $A_j \neq \emptyset$ and $a_j = \sup A_j > a_{j-1} >a_{j-2} >...>a_1$.
If $j+1 <i$ then it follows from Lemma \ref{lema-numerodeceros-w-alpha1}  and Lemma 3.4 that $w_{a_j} (\pi /2) =0$, $A_{j+1} \neq \emptyset$.  As in the case $j=1$ we see then that
$a_{j+1}=\sup A_{j+1} >a_j$.
By induction we see that $\forall j \geq 2$, $A_j \neq \emptyset$ and $a_j > a_{j-1}$.
This implies that for any $0\leq j<i$  there exists $a_j >1$ such that
$w_{a_j} (\pi /2 )=0$ and $w_{a_j}$ has exactly $j$ zeroes in $(0,\pi /2)$. Then by Lemma 3.2 $w_{a_j} '(\pi )=0$ and
$w_{a_j}$ has exactly $2j+1$ zeroes in $(0,\pi )$. This means that we have proved the theorem in case $k$ is odd.

On the other hand since $w_{a_j} $ has exactly one zero less than $w_{a_{j+1}} $ in $(0,\pi /2)$, it follows that
 $w_{a_j}' (\frac{\pi}{2})$ and $w_{a_{j+1}} '(\frac{\pi}{2})$ have different signs. It then follows that there exists $a\in
(a_j , a_{j+1})$ such that $w_a ' (\pi /2 )=0$.  Let $b_j = \inf \{ a \in (a_j , a_{j+1} ) : \ w_a ' (\pi /2)=0 \}$. Then
$w_{b_j}' (\pi /2 )=0$ and for any $a\in (a_j , b_j )$ we have that $w_a '(\pi /2 )$ has the
same sign as $w_{a_j}' (\frac{\pi}{2})$. By the definition of $a_j$ (and the discussion above) we know that for any $a > a_j$
$w_a$ has at least $j+1$ zeroes in $(0,\pi /2 )$ and if moreover  $a$ is close to $a_j$ then
$w_a$ has exactly $j+1$ zeroes. Since the sign of $w_a '(\pi /2 )$ does not change it then follows that
$w_a$ has exactly $j+1$ zeroes for all $a\in (a_j , b_j )$ and Lemma 3.3  implies that $w_{b_j}$ also has exactly $j+1$ zeroes in $(0,\pi /2)$. Then by Lemma 3.1 $w_{b_j}' (\pi )=0$
and $w_{b_j}$ has exactly $2(j+1)$ zeroes in $(0,\pi )$. This proves the theorem when $k$ is even and we have
therefore concluded the proof of the thoerem.

\end{proof}

%%%%%%%%%%%%%%%%%%%%%%%%%%%%%%%%%%%%%%%%%%%%%%%%%%%%%%%%%%%%%%%%%%%%%%%%%%%%%%%%%%%%%%%%%%%%%%%%%%%%%%%%%%%%%%%%%%%%%%%%%%%%%%%%%%%%%%%%%%%%%%%%%%%%%%%%%%%%%%
\section{Bifurcation for positive solutions, proof of Theorem 2.2}

We use bifurcation theory. We denote by $X_I$ the set of invariant functions on  ${\bf S}^n\times {\bf S}^n$.
Consider the Banach space

$$C^{2, \alpha}\left(X_I \right):= X_I \cap C^{2, \alpha} \left({\bf S}^n \times {\bf S}^n\right) .$$

As in Section 2 we identify $C^{2, \alpha}\left(X_I \right)$ with the set of functions $w\in C^{2,\alpha}([0,\pi ])$ such
that $w'(0)=w'(\pi )=0$. Similarly we define $C^{0, \alpha}\left(X_I \right):= X_I \cap C^{0, \alpha} \left({\bf S}^n \times {\bf S}^n\right) $,
which is identified with $C^{0,\alpha}([0,\pi ])$. Now let $C_+^{2, \alpha}\left(X_I \right) $ denote the set of
positive functions on $C^{2, \alpha}\left(X_I \right)$.

We define $S:C_+^{2, \alpha}\left(X_I \right) \times \re_{\geq 0} \rightarrow C^{0, \alpha}\left(X_I \right)$
by

$$S(u,\lambda )= -\Delta_{G_{\delta}} u +\lambda (u -u^{p-1} ).$$

We have for any $\lambda \geq 0$ that $S(1,\lambda )=0$ and we will study solutions of $S(u,\lambda )=0$
which bifurcate for the curve $(1,\lambda )$. The local bifurcation theory is well known, any detail about  what we
will use in this section can be found for instance in Chapter 2 of \cite{Ambrosetti-Malchiodi} or in \cite{Nirenberg}.

Note that

$$S_u ' (1,\lambda) [v] = -\Delta_{G_{\delta } }v - \lambda (p-2) v .$$

Then as in Section 2, we write $v(x)=w(\arccos f(x))$ for a function
$w:[0,\pi ] \rightarrow \re$ with $w'(0)=w'(\pi )=0$, and $S_u ' (1,\lambda) [v]=0$ if and only if

\begin{equation}  \label{ecuacion-eigenvalores}
w''(r)+(n-1)\frac{\cos (r)}{\sin (r)}w'(r) +  \frac{ \lambda (p-2) }{1+ \frac{1}{\delta}}w(r)=0
\end{equation}

We are then led to consider for any positive constant $\beta$ the solution $w_{\beta}$ of the initial
value problem

\begin{equation}  \label{ecuacion-eigenvalores-beta}
w''(r)+(n-1)\frac{\cos (r)}{\sin (r)}w'(r) +  \beta w(r)=0,  \ \ \  w (0)=1, \ \ \ \  \ w'(0)=0.
\end{equation}

This equation is well-known, it corresponds to the eigenvalue equation for the Laplacian on the sphere. If
$\beta_k = k(n+k-1)$ then $w_{\beta_k}$ can be computed explicitly. For instance:

$$ w_{\beta_1}(r) = \cos (r) ,\;\;\;\;  w_{\beta_2} (r) = \frac{n+1}{n} \cos^2 ( r) -\frac{1}{n}, \;\;w_{\beta_3} (r)  = \frac{n+4}{n-2} \cos^3(r) -\frac{6}{n-2} \cos (r) .$$

In general we call

\begin{equation} \label{operador-H-mu-SnxSn}
H_{\beta}(w) :=  w''(r) + \left(n-1\right)\frac{\cos (r)}{\sin (r)} w'(r)  +\beta w(r).
\end{equation}

Then, for each positive integer $k$ we have

$$H_{\beta}(\cos^{k} (r)) = (\beta -\beta_k )\cos^{k} (r)+ k(k-1) \cos^{k-2} (r)  .$$

It is then easy to see that $w_{\beta_k} (r)  = p_k (\cos (r))$,  where $p_k$ is a polynomial  of degree $k$. If
$k$ is odd then $p_k$ is a sum of monomials of odd degree and if $k$ is even $p_k$ is a sum of monomials
of even degree.

For each positive integer $k$, note that $\lambda_k =  \frac{ \beta_k }{p-2}\left( 1+ \frac{1}{\delta}\right)$ and denote by
$L_k = S_u ' (1,\lambda_k ) $.  Then by the previous considerations  $\ker ( L_k )=\langle w_{\beta_k} \rangle $ has dimension 1.

Note that by integration by parts if $x \in C^{2, \alpha} \left(   X_I \right)$ then

$$   0 =\int_0^{\pi}  L_k(w_{\beta_k}) \ x\  dr  = \int_0^{\pi}  L_k(x) \ w_{\beta_k} \  dr .$$
This implies that the range $R(L)$ of $L$, is

$$R(L_k) = \left\{ y \in C^{0, \alpha} \left([0, \pi]\right) :    \int_0^{\pi}  y w_k  \ dr =0 \right\}.$$

On the other hand, note that
$$ S_{u, \lambda} '' (1,\lambda_k)[w_{\beta_k}] =  (p-2)w_{\beta_k}, $$

\noindent and since $ \int_0^{\pi}   w_k^2 \ dr  \neq 0  $  we have that

 $$S_{u, \lambda} '' (1,\lambda_k)[w_k] \notin R(L_k) $$

Therefore,  by (\cite[Theorem 2.8]{Ambrosetti-Malchiodi}), the points
  $(1, \lambda_k )$ are bifurcation points of $S(u, \lambda ) =0$. Moreover, close to $(1, \lambda_k )$
the space of solutions consists of two curves: one is the curve of {\it trivial} solutions $\lambda \mapsto (1, \lambda )$,
and the other one is a curve of nontrivial solutions which  has the form $t \mapsto (u(t) ,  \lambda (t) )$ where $\lambda (0) = \lambda_k$ and
$u(t) = 1 + t w_{\beta_k} + o(t^2 )$.

Note that if $u$ is a nontrivial solution then for any $r\in (0,\pi )$, if $u(r)=1$ then $u' (r) \neq 0$. It follows that the
number of zeroes of $u-1$ is constant in an open $C^2$-neighborhood of $u$.

Now, we  point out that the solution   $w_{\beta_k}$, which  generates $ \ker L_k$,   has exactly $k$ zeroes in  $(0, \pi)$. This is explicitly proved for instance in \cite{YanYan} and in \cite{Petean2}, we just give a sketch of a proof for
completeness:

First note that since $p_k$ is a polynomial of degree $k$ then  $w_{\beta_k}$ can have at most $k$
zeroes in $(0,\pi )$ and they must be simple since $w_{\beta_k}$ solves a second order linear ordinary
differential equation. Then by  direct  calculation, we can verify that $w_{\beta_i}$, have $i$ zeroes in $(0, \pi)$, for $i=1, 2, 3$.
Let $m<l$ be positive integers. Since  $\beta_m = m (n+m-1)< l(n+l-1) =\beta_l$ then, by the Sturm comparison theorem
(see for instance \cite[Page 229]{Ince}), between any two zeroes of $w_{\beta_m}$
there is at least one zero of $w_{\beta_l}$. Hence,  $w_{\beta_l}$ has at least the same number of zeroes as  $w_{\beta_m}$ and if it has exactly the same number,
then   $w_{\beta_l}$ and $w_{\beta_m}$ have  the same sign after the last zero.
Suppose that  $w_k$ has  $k$ zeroes in  $(0, \pi) $. Notice that if  $k$ is even, then  $w_{\beta_k}$ is a  polynomial in  $\cos (r)$ whose  exponents  are even,
therefore $w_{\beta_k}$ is symmetric with  respect to  $\frac{\pi}{2}$  which implies  $w_{\beta_k} (\pi) = w_{\beta_k}(0)=1$.
If $k$ is odd, then  $w_{\beta_k}$ is a  polynomial in $\cos (r)$ whose  exponents  are odd, then $w_{\beta_k}$
is antisymmetric with respect to  $\frac{\pi}{2}$  and we have   $w_{\beta_k} (\pi) =-1$.
By the previous comment, when we move from  $k$ to $k+1$, the corresponding solutions change  sign
 in  $\pi$,  it follows that $w_{\beta_{k+1}}$ must have at least one more zero than   $w_{\beta_k}$. Therefore, by induction
$w_{\beta_{k+1}}$ has at least  $k+1$ zeroes in $(0, \pi)$. By the previous comments  we conclude that
for any positive integer $k$, $w_{\beta_k}$ has
exactly $k$ zeroes in $(0, \pi)$ which are simple, as claimed.

\vspace{.5cm}

Let  $C$ be the closure of the family of positive nontrivial solutions  $(u,\lambda )$ of
$S(u,\lambda )=0$ in $C^{2,\alpha} (X_I  )$. Let $C_k$ be the connected component of  $C$ containing the bifurcation point  $(1,\lambda_k )$.
For the curve of nontrivial solutions $(u(t), \lambda (t))$ close to $(1,\lambda_k )$ we have
$u(t) = 1 + t w_{\beta_k} + o(t^2 )$.
It follows then from the previous comments that  $(u(t) ,\lambda (t) ) \in C_k$ and if $u\neq1$ then
$u$ has exactly $k$-zeroes in $(0,\pi )$. In particular $(1,\lambda_i )$ does not belong to $C_k$ if $i\neq k$.

\vspace{.3cm}

{\it Claim 1}: $C_k$ is not compact
\begin{proof} To prove the claim we will use  the  global bifurcation theorem of Rabinowitz, as in (for instance)
\cite[Theorem 4.8]{Ambrosetti-Malchiodi}. Let us briefly describe how to write the equation in the setup of
the global bifurcation theorem.

Given a solution  $u \colon {\bf S}^n\times {\bf S}^n \to \re_{>0}$ of equation (\ref{positive})
we let  $ w = u-1$.  Then, $u$ is a solution of  (\ref{positive}) if and only if $w$ verifies
\begin{equation}
-\Delta_{G_{\delta}} w  + \lambda (w+ 1) = \lambda (w+1)^{p-1} \label{ecuacion-w+1}.
\end{equation}

Let  $K \colon C^{2, \alpha}\left(X_I \right) \to C^{2, \alpha}\left(X_I\right)$ be the inverse operator of

$$-\Delta_{G_{\delta}} +Id \colon C^{4, \alpha}\left(X_I\right)  \to C^{2, \alpha}\left(X_I \right).$$

The  operator  $K$ is linear and compact.
Consider  the region

$$D:= \left\{(w, \eta) \in C^{2, \alpha}\left(X_I \right)\times \re :w >-1,  \eta > 1 \right\}, $$

\noindent
and define $T \colon D \to C^{2, \alpha}\left(X_I \right)$ by

$$T(w, \eta) = \frac{\eta -1}{p-2} K\left( (w+1)^{p-1} -(p-1)w-1)      \right ).$$

$T$ is  a compact operator and for each $\eta >1$ ,  $ T(0, \eta) =0, \;\;\; T_w'(0, \eta) =0 $. Now  define  $ F\colon D \to C^{2, \alpha}\left(X_I \right) $ by

$$F(w, \eta) = w- \eta K(w) -T(w, \eta)$$

Note that  $F(0, \eta) =0 $ for each $\eta$. And  if we apply  $-\Delta_{G_{\delta} } +Id$  to the equation $F(w, \eta) =0$, we see that $F(w, \eta) =0$ if and only if

$$-\Delta_{G_{\delta}  } w -\frac{\eta -1}{p-2} \left((w+1)^{p-1} - (w+1)\right)=0$$

Therefore, $F(w, \eta) =0$ if and only if $w$ is a solution of equation (\ref{ecuacion-w+1}) for
$\lambda = \frac{\eta -1}{p-2}$.

Let $\eta_k = \lambda_k (p-2) +1$.
Similarly as before we let
$B$ be the closure of the non-trivial solutions  $(w,\eta)$ of
$F(w,\eta)=0$ in $D$ and  $B_k$ be the connected component of  $B$ containing the bifurcation point
$(0,\eta_k )$. In this context we can apply the
 global bifurcation theorem of Rabinowitz
(\cite{Ambrosetti-Malchiodi}, Theorem 4.8): it follows  that either  $B_k$ is not compact or   $B_k$
contains another bifurcation point $(0,\eta_j)$ with $j\neq k$. But we have seen that the second condition does
not hold, therefore $B_k$  is not compact. But $C_k = \{ (w+1 , \frac{\eta- 1}{p-2} ) : (w,\eta ) \in B_k \}$ and
therefore $C_k$ is not compact.

\end{proof}

Note  that  $({\bf S}^n \times {\bf S}^n, G_{\delta})$ has positive Ricci curvature and by \cite[Theorem 6.1]{Bidaut-Veron},  there exist  $\rho>0$
such that  if $\lambda < \rho$ the equation  (\ref{positive}) only has the trivial solution.

\vspace{.5cm}

{\it Claim 2}: For any $\lambda_0$, $0< \rho < \lambda_0$, the set $$A:=\{ (u,\lambda ): S(u, \lambda )=0 , \  \lambda \in [\rho , \lambda_0 ] \}, $$
 is compact.

The claim is well-known, see for instance the proof of \cite[Lemma 2.2]{YanYan}. First one has to note that there exists $\Lambda >0$ such that if $(u,\lambda ) \in A$ then
$u \leq \Lambda$. This is proved by the blow up technique (see for instance the proof in \cite[Theorem 2.1, page 200]{Schoen-Yau}): if there exists a sequence $(u_i  , \lambda_i ) \in A$ and
$x_i \in {\bf S}^n \times {\bf S}^n$ such that
$u_i (x_i ) \rightarrow \infty$ then by taking a subsequence we can assume that $x_i \rightarrow x \in {\bf S}^n \times {\bf S}^n$
and $\lambda_i \rightarrow \lambda \in [\rho , \lambda_0 ]$. Then by taking a normal neighborhood of $x$ and renormalizing $u_i$ one would construct as a limit a positive solution of $\Delta u +\lambda u^{p-1} =0$
in $\re^{2n}$. But since
$p<p_{2n}$ is subcritical such solution does not exist by \cite{Gidas-Spruck}.  Then we consider again the
compact operator  $K \colon C^{2, \alpha}\left(X_I \right) \to C^{2, \alpha}\left(X_I\right)$ from the proof
of Claim 1 (the inverse of $-\Delta_{G_{\delta}} +Id$) and point out that $S(u,\lambda )=0$ if and only if
$u=K(\lambda u^{p-1} -(\lambda -1) u)$: this implies that $A$ is compact.

\vspace{.5cm}

If there exists $\lambda_* > \lambda_k$ such that it does not exist $u \neq 0$ such that  $(u, \lambda_* ) \in C_k$,
then since $C_k$ is connected we have that $C_k \subset C_+^{2,\alpha} (X_I ) \times [\rho ,\lambda_* ]$. But then Claim 2 would imply that $C_k$ is
compact, contradicting Claim 1. Then for any $\lambda > \lambda_k$ there exist $u \neq 0$ such that  $(u, \lambda ) \in C_k$. Since $C_k \cap C_j =\emptyset$ if $j \neq k$, this proves Theorem 2.2.

\vspace{.1cm}

\end{document}